\newcommand{\NN}{\mathbb N}
\newcommand{\RR}{\mathbb R}
\newcommand{\DD}{\mathcal D}
\theoremstyle{plain}
\newtheorem{theorem}{Theorem}[section]
\theoremstyle{remark}
\theoremstyle{definition}
\newcommand{\beq}{\begin{eqnarray}}
\newcommand{\eeq}{\end{eqnarray}}
\newcommand{\beqs}{\begin{eqnarray*}}
\newcommand{\eeqs}{\end{eqnarray*}}
\author[S. Pilipovi\' c]{Stevan Pilipovi\' c}
\thanks{The work of S. Pilipovi\'c was supported by the
grants of the
Ministry of Education, Science and
Technological development of
Republic of Serbia
174024.}
\address{Department of Mathematics and Informatics,
University of Novi Sad, Trg Dositeja Obradovi\'{c}a 4, 21000 Novi Sad, Serbia}
\email{stevan.pilipovic@dmi.uns.ac.rs}
\author[B. Prangoski]{Bojan Prangoski}
\thanks{The work of B. Prangoski was
partially supported by the COST Action CA15225
``Fractional-- order
systems--analysis, synthesis and
their importance for future design''
and by the
bilateral project
``Microlocal analysis and applications''
between the Macedonian and
Serbian academies of sciences and arts.}
\address{Department for Mathematics, Faculty of Mechanical
Engineering-Skopje, Karposh 2 b.b., 1000 Skopje, Macedonia}
\email{bprangoski@yahoo.com}
\title[On the characterisations of wave front sets via the STFT]{On the characterisations of wave front sets via the short-time Fourier transform}
\keywords{wave front set, short-time Fourier transform}
\subjclass[2010]{35A18, 42B10}
\begin{document}

\begin{abstract} It is well known that  the classical  and Sobolev wave fronts were extended into non-equivalent global versions by the use of the short-time Fourier transform. In this very short paper we give  complete characterisations of initial wave front sets via the short-time Fourier transform.
\end{abstract}

\maketitle

The goal of this note is to give
descriptions of the classical (local) wave
front set as well as the Sobolev wave front set,
both in the sense of
H\" ormander \cite{hor11}--\cite{hor0}, of
a distribution $f$, by its short-time
Fourier transform, from now
on abbreviated as STFT. The STFT is also
known as the wave-packet
transform and it was introduced by
C\'ordoba and Fefferman \cite{corff}
 (see also \cite{groch}). The STFT of
 $f\in\mathcal{S}'(\mathbb{R}^d)$
 with a window (also known as wave packet)
  $0\neq\chi\in\mathcal{S}(\mathbb{R}^d)$ is defined
  by $V_{\chi}f(x,\xi)=\mathcal{F}_{t\rightarrow \xi}
  (f(t)\overline{\chi(t-x)})$,
  where $\mathcal{F}$ is the Fourier transform given by
  $\mathcal{F}g(\xi)=\int_{\mathbb{R}^d}e^{-ix\xi}g(x)dx$,
  $g\in L^1(\mathbb{R}^d)$. In fact, $V_{\chi}f$
  is a smooth function
  in $(x,\xi)$. When the window $\chi$ is
  compactly supported, we can
  naturally extend the definition of the STFT even when
  $f\in\mathcal{D}'(\mathbb{R}^d)$; namely
  $V_{\chi}f(x,\xi)=\langle e^{-i\xi \cdot} f,
  \overline{\chi(\cdot-x)}\rangle$.
  Clearly, even in this case,
  $V_{\chi}f\in\mathcal{C}^{\infty}(\mathbb{R}^{2d})$.\\
Folland \cite{fol} gave a characterisation of
the wave front set of $f\in\mathcal{S}'(\mathbb{R}^d)$
via its STFT under some restrictions on the window
$\chi\in\mathcal{S}(\mathbb{R}^d)$. Later
\=Okaji \cite{okaji} relaxed the restrictions
on the window and only recently
Kato, Kobayashi and Ito \cite{kkisf}
managed to gave a characterisation
without any restriction on $\chi$.
Our paper is motivated by \cite{kkisf}
where the authors have given a nice application
to the Schr\" odinger equation.

In several recent papers it was shown
that the homogeneous wave front
(cf.  \cite{martinez}, \cite{mns}, \cite{melrose},
\cite{mizahura}, \cite{nakamura}, \cite{RZ})
and the equivalent to it, Gabor wave front
set (cf. \cite{rw}, \cite{mcrs}, \cite{coriascosulc},
\cite{ScW}, \cite{ScW2}) are equivalent to the global
one of H\" ormander. Up to our knowledge, the local
(classical) $\mathcal{C}^{\infty}$ and Sobolev-type
H\" ormander's definitions of wave fronts based on the
STFT are not given in the literature. We do this in
Theorem 1.1. $(iv)$ and Theorem 1.2 $(iv)$. Moreover,
Theorem 1.1 $(iii)$ as well as Theorem 1.2 $(ii)$--$(iii)$
give new characterisations of both wave front sets.
The purpose of this article is to fill in these gaps.
Note that the $L^q$, $q\in[1,\infty)$, versions, so
called $\mathcal F L^q_\omega$ weighted wave
fronts \cite{PTT} (equal to the Sobolev one for $q=2$)
can be treated in the same way as in the case $q=2$.

\section{The main results}

We begin by recalling the definition of H\"ormander \cite{hor11} (cf. \cite[Definition 8.1.2, p. 254]{hor}) for the wave front set $WF(f)\subseteq \RR^d\times(\RR^d\backslash\{0\})$ of $f\in\DD'(\RR^d)$:\\
\indent $(x_0,\xi_0)\in\RR^d\times(\RR^d\backslash\{0\})$ does not belong to $WF(f)$ if there exists $\chi\in\DD(\RR^d)$ with $\chi(x_0)\neq 0$ and a cone neighbourhood $\Gamma$ of $\xi_0$ such that for every $n\in\NN$ there exists $C_{n,\chi}>0$ such that \beq\label{sftkrn137}
|\mathcal{F}(\chi f)(\xi)|\leq C_{n,\chi}(1+|\xi|)^{-n},\,\, \forall \xi\in\Gamma.
\eeq

\begin{theorem}
Let $f\in\DD'(\RR^d)$ and $(x_0,\xi_0)\in\RR^d\times(\RR^d\backslash\{0\})$. The following conditions are equivalent.
\begin{itemize}
\item[$(i)$] $(x_0,\xi_0)\not\in WF(f)$.
\item[$(ii)$] There exist a compact neighbourhood $K$ of $x_0$ and a cone neighbourhood $\Gamma$ of $\xi_0$ such that for every $n\in\NN$ and $\chi\in\DD_K$\footnote{as usual, $\DD_K$ stands for the Fr\'echet space of all smooth functions supported by $K$} there exists $C_{n,\chi}>0$ such that (\ref{sftkrn137}) is valid
\item[$(iii)$] There exist a compact neighbourhood $K$ of $x_0$ and a cone neighbourhood $\Gamma$ of $\xi_0$ such that for each $n\in\NN$ there exist $C_n>0$ and $k_n\in\NN$ such that
    \beqs
    |V_{\chi}(f)(x,\xi)|\leq C_n\sup_{|\alpha|\leq k_n}\|D^{\alpha}\chi\|_{L^{\infty}(\RR^d)}(1+|\xi|)^{-n},\,\, \forall x\in K,\, \forall \xi\in\Gamma,\, \forall \chi\in \DD_{K-\{x_0\}},
    \eeqs
    where $K-\{x_0\}=\{y\in\RR^d|\, y+x_0\in K\}$.
\item[$(iv)$] There exist a compact neighbourhood $K$ of $x_0$, a cone neighbourhood $\Gamma$ of $\xi_0$ and $\chi\in\DD(\RR^d)$, with $\chi(0)\neq 0$ such that for each $n\in\NN$ there exists $C_{n,\chi}>0$ such that
    \beqs
    |V_{\chi}(f)(x,\xi)|\leq C_{n,\chi}(1+|\xi|)^{-n},\,\, \forall x\in K,\, \forall \xi\in\Gamma.
    \eeqs
\end{itemize}
\end{theorem}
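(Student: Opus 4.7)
The natural strategy is the cycle $(i)\Rightarrow(ii)\Rightarrow(iii)\Rightarrow(iv)\Rightarrow(i)$. The implication $(iii)\Rightarrow(iv)$ is immediate, since $K-\{x_0\}$ is a neighbourhood of $0$ and one simply picks any $\chi\in\DD_{K-\{x_0\}}$ with $\chi(0)\neq 0$. For $(i)\Rightarrow(ii)$, I would apply the classical H\"ormander convolution-splitting argument: given $\chi_0$ with $\chi_0(x_0)\neq 0$ satisfying (\ref{sftkrn137}) in some cone $\Gamma_0$, shrink $K$ to a compact neighbourhood of $x_0$ on which $\chi_0$ is nonzero, and for $\chi\in\DD_K$ factor $\chi=(\chi/\chi_0)\chi_0$ so that $\mathcal{F}(\chi f)=(2\pi)^{-d}\widehat{\chi/\chi_0}*\mathcal{F}(\chi_0 f)$; splitting the convolution according to whether the integration variable lies in a slightly smaller cone $\Gamma\Subset\Gamma_0$ with magnitude $\geq|\xi|/2$, one uses the decay of $\mathcal{F}(\chi_0 f)$ on $\Gamma_0$ in one region and the rapid decay of $\widehat{\chi/\chi_0}$ combined with the polynomial bound on $\mathcal{F}(\chi_0 f)$ in the other.

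For $(ii)\Rightarrow(iii)$, the key point is that for each fixed $\xi\in\Gamma$ the functional $\chi\mapsto(1+|\xi|)^n\langle f,\chi e^{-i\xi\cdot}\rangle$ is continuous on the Fr\'echet space $\DD_K$, and $(ii)$ says this family (indexed by $\xi\in\Gamma$) is pointwise bounded. Banach--Steinhaus then yields equicontinuity, which is precisely a uniform bound by a seminorm of the form $\sup_{|\alpha|\leq k_n}\|D^\alpha\chi\|_{L^\infty}$. Passing to a smaller compact neighbourhood $K'$ of $x_0$ with $K'+(K'-\{x_0\})\subseteq K$, for $x\in K'$ and $\chi\in\DD_{K'-\{x_0\}}$ the function $\overline{\chi(\cdot-x)}$ lies in $\DD_K$ with the same seminorms, and since $V_\chi f(x,\xi)=\mathcal{F}(f\overline{\chi(\cdot-x)})(\xi)$, the estimate of $(iii)$ follows (with $K'$ in the role of $K$).

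The main content is $(iv)\Rightarrow(i)$, which I would handle via a distributional STFT-type reconstruction with a well-chosen dual window. Choose $\phi\in\DD(\RR^d)$ with $\phi(x_0)\neq 0$ and small support around $x_0$, and $\chi_2\in\DD(\RR^d)$ with small support near $0$ such that $c:=\int\chi_2(y)\overline{\chi(y)}\,dy\neq 0$ (possible because $\chi(0)\neq 0$) and such that $L:=\overline{\supp\phi-\supp\chi_2}\subseteq K$. The pointwise identity $\int\chi_2(t-x)\overline{\chi(t-x)}\,dx=c$, which is independent of $t$, yields $\phi(t)=c^{-1}\int_L h_x(t)\overline{\chi(t-x)}\,dx$ with $h_x(t):=\phi(t)\chi_2(t-x)$ forming a family of test functions whose supports and seminorms are bounded uniformly for $x\in L$. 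Testing against an arbitrary $\psi\in\DD$ and invoking Fubini elevates this to the distributional identity $\phi f=c^{-1}\int_L h_x\,(f\overline{\chi(\cdot-x)})\,dx$, and taking the Fourier transform produces
\beqs
\mathcal{F}(\phi f)(\eta)=\frac{1}{(2\pi)^d c}\int_L\int_{\RR^d}\widehat{h_x}(\eta-\xi)V_\chi f(x,\xi)\,d\xi\,dx.
\eeqs

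To conclude, fix $N\in\NN$ and $\eta$ in a narrow cone $\Gamma'\Subset\Gamma$ around $\xi_0$, and split the inner integral into three regions: (a) $\xi\in\Gamma$ with $|\xi|\geq|\eta|/2$, where $(1+|\xi|)^{-n}\leq 2^n(1+|\eta|)^{-n}$ so the STFT decay of $(iv)$ gives the required bound after choosing $n$ large enough for integrability; (b) $\xi\in\Gamma$ with $|\xi|<|\eta|/2$, where $|\eta-\xi|\geq|\eta|/2$ and the rapid decay of $\widehat{h_x}$, uniform in $x\in L$, supplies the decay in $\eta$; (c) $\xi\notin\Gamma$, where cone separation from $\Gamma'$ yields $|\eta-\xi|\geq c'\max(|\eta|,|\xi|)$, so that $(1+|\eta-\xi|)^{-m}\leq C(1+|\eta|)^{-m/2}(1+|\xi|)^{-m/2}$ for any $m$, overwhelming the polynomial bound $|V_\chi f(x,\xi)|\leq C(1+|\xi|)^M$ valid uniformly on $L$. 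Each region contributes $O((1+|\eta|)^{-N})$, and integration over the compact set $L$ preserves this, giving $(i)$. I expect the principal obstacle to be the correct formulation and rigorous justification of the reconstruction at the distributional level, since $V_\chi f$ need not be absolutely integrable in $\xi$; this is circumvented by reconstructing $\phi$ (rather than $f$) from translates of $\overline{\chi}$, which confines the $x$-integration to the compact $L$ and avoids any synthesis formula for $f$ itself.
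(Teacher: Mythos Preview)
Your treatment of $(i)\Rightarrow(ii)$ and $(ii)\Rightarrow(iii)$ matches the paper's almost exactly: the paper invokes H\"ormander's Lemma 8.1.1 and the division $\psi f=(\psi/\chi)\chi f$ for the first, and for the second observes that the family $H_n=\{(1+|\xi|)^n e^{-i\xi\cdot}f\mid\xi\in\Gamma\}$ is weakly bounded in $\DD'_{K_1}$, hence equicontinuous since $\DD_{K_1}$ is barrelled, and then shrinks $K_1$ to $K=\overline{B_{r/2}(x_0)}$ so that $\overline{\chi(\cdot-x)}\in\DD_{K_1}$ for $x\in K$, $\chi\in\DD_{K-\{x_0\}}$.

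Where you diverge sharply is $(iv)\Rightarrow(i)$, which you call ``the main content'' and attack via an STFT reconstruction of $\phi f$. In the paper this implication is a one-liner: since $\chi\in\DD(\RR^d)$ is compactly supported, specialising the estimate in $(iv)$ to $x=x_0$ gives
\[
|\mathcal{F}(\psi f)(\xi)|=|V_\chi f(x_0,\xi)|\leq C_{n,\chi}(1+|\xi|)^{-n},\qquad\xi\in\Gamma,
\]
where $\psi(t):=\overline{\chi(t-x_0)}\in\DD(\RR^d)$ satisfies $\psi(x_0)=\overline{\chi(0)}\neq 0$; this is precisely the defining condition for $(x_0,\xi_0)\notin WF(f)$. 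Your reconstruction argument appears to be correct, but it is doing far more work than required; that machinery is what one needs when the window lies only in $\mathcal{S}(\RR^d)$ (as in the Folland/\=Okaji/Kato--Kobayashi--Ito setting), because then $f\,\overline{\chi(\cdot-x_0)}$ is not a priori a compactly supported distribution and one cannot simply read off the wave-front condition at a single $x$. In the present theorem the hypothesis $\chi\in\DD(\RR^d)$ makes all of that superfluous.
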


\begin{proof} $(i)\Rightarrow (ii)$
The fact that $(x_0,\xi_0)\not\in WF(f)$ implies the existence of $\chi\in\DD(\RR^d)$ with $\chi(x_0)\neq 0$ and a cone neighbourhood $\Gamma'$ of $\xi_0$ for which (\ref{sftkrn137}) is valid for $\xi\in\Gamma'$. There exists a compact neighbourhood $K$ of $x_0$ where $\chi$ never vanishes. Fix a cone neighbourhood $\Gamma$ of $\xi_0$ such that $\overline{\Gamma}\subseteq \Gamma'\cup\{0\}$.
 By employing exactly the same technique as in the proof of \cite[Lemma 8.1.1, p. 253]{hor} we obtain that for each $n\in\NN$ and $\psi\in\DD_K$ we have $|\mathcal{F}(\psi\chi f)(\xi)|\leq C_{n,\psi,\chi}(1+|\xi|)^{-n}$, $\forall \xi\in\ \Gamma$. Then $(ii)$  immediately follows from this since $\psi f=(\psi/\chi)\chi f$ where $\psi/\chi\in\DD_K$.

$(ii)\Rightarrow (iii)$ By $(ii)$, there exists a compact neighbourhood $K_1$ of $x_0$ and a cone neighbourhood $\Gamma$ of $\xi_0$ such that for every $n\in\NN$ and $\chi\in\DD_{K_1}$ there exists $C_{n,\chi}>0$ such that (\ref{sftkrn137}) is valid. Of course, we can assume $K_1=\overline{B_r(x_0)}$, for some $r>0$, where $B_r(x_0)$ stands for the open ball with centre $x_0$ and radius $r$. Notice that (\ref{sftkrn137}) implies that for each $n\in\NN$, the set $H_n=\{(1+|\xi|)^n e^{-i\xi\cdot} f|\, \xi\in \Gamma\}$ is weakly bounded in $\DD'_{K_1}$ and hence equicontinuous as $\DD_{K_1}$ is barrelled. Put $K=\overline{B_{r/2}(x_0)}$. For each $\chi\in\DD_{K-\{x_0\}}$ and $x\in K$ the function $t\mapsto \chi(t-x)$ is in $\DD_{K_1}$ and the equicontinuity of $H_n$ implies the existence of $C_n>0$ and $k_n\in\NN$ such that
\beqs
|\langle e^{-i\xi\cdot} f,\overline{\chi(\cdot-x)}\rangle|&\leq& C_n(1+|\xi|)^{-n}\sup_{|\alpha|\leq k_n}\sup_{t\in K_1}|D^{\alpha}\chi(t-x)|\\
&=&C_n\sup_{|\alpha|\leq k_n}\|D^{\alpha}\chi\|_{L^{\infty}(\RR^d)}(1+|\xi|)^{-n},\,\, \forall \xi\in\Gamma,\, \forall x\in K,
\eeqs
which implies the validity of $(iii)$. Notice that $(iii)\Rightarrow (iv)$ is trivial and $(iv)\Rightarrow (i)$ follows immediately by specialising the estimate in $(iv)$ for $x=x_0$.
\end{proof}

We can also give similar characterisation of the Sobolev wave front. Before we give the result we recall its definition (see \cite[Definition 8.2.5, p. 188; Proposition 8.2.6, p. 189]{hor0}):\\
\indent Let $f\in\DD'(\RR^d)$ and $s\in\RR$. We say that $(x_0,\xi_0)\in\RR^d\times(\RR^d\backslash\{0\})$ does not belong to $WF_{H^s}(f)$ if there exist $\chi\in\DD(\RR^d)$ with $\chi(x_0)\neq 0$ and a cone neighbourhood $\Gamma$ of $\xi_0$ such that $\|\langle \cdot\rangle^s\mathcal{F}(\chi f)\|_{L^2(\Gamma)}<\infty$.\footnote{as usual, $\langle \xi\rangle$ stands for $(1+|\xi|^2)^{1/2}$}

\begin{theorem}
Let $f\in\DD'(\RR^d)$, $(x_0,\xi_0)\in\RR^d\times(\RR^d\backslash\{0\})$ and $s\in\RR$. The following conditions are equivalent.
\begin{itemize}
\item[$(i)$] $(x_0,\xi_0)\not\in WF_{H^s}(f)$.
\item[$(ii)$] There exist a compact neighbourhood $K$ of $x_0$ and a cone neighbourhood $\Gamma$ of $\xi_0$ such that the mapping $\chi\mapsto \langle \cdot\rangle^s\mathcal{F}(\chi f)$, $\DD_K\rightarrow L^2(\Gamma)$, is well-defined and continuous.
\item[$(iii)$] There exist a compact neighbourhood $K$ of $x_0$, a cone neighbourhood $\Gamma$ of $\xi_0$ and $C>0$ and $k\in\NN$ such that
    \beqs
    \sup_{x\in K}\|\langle \cdot\rangle^s V_{\chi}f(x,\cdot)\|_{L^2(\Gamma)}\leq C\sup_{|\alpha|\leq k}\|D^{\alpha}\chi\|_{L^{\infty}(\RR^d)},\,\, \forall \chi\in\DD_{K-\{x_0\}}.
    \eeqs
\item[$(iv)$] There exist a compact neighbourhood $K$ of $x_0$, a cone neighbourhood $\Gamma$ of $\xi_0$ and $\chi\in\DD(\RR^d)$ with $\chi(0)\neq0$ such that $\sup_{x\in K}\|\langle \cdot\rangle^s V_{\chi}f(x,\cdot)\|_{L^2(\Gamma)}<\infty$.
\end{itemize}
\end{theorem}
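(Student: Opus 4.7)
The plan is to close the cycle $(i)\Rightarrow(ii)\Rightarrow(iii)\Rightarrow(iv)\Rightarrow(i)$, following the same architecture as in the proof of Theorem 1.1 but systematically replacing the pointwise rapid-decay estimate by the weighted $L^2(\Gamma)$ norm. The only substantial work lies in $(i)\Rightarrow(ii)$; the other three implications are structural transfers of the arguments from Theorem 1.1.

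For $(i)\Rightarrow(ii)$, I would pick $\chi_0\in\DD(\RR^d)$ realising the Sobolev wave front condition on a cone $\Gamma'$ containing $\xi_0$, and a compact neighbourhood $K$ of $x_0$ on which $\chi_0$ does not vanish; I choose open cones $\Gamma, \Gamma''$ with $\overline{\Gamma}\subseteq \Gamma''\cup\{0\}$ and $\overline{\Gamma''}\subseteq \Gamma'\cup\{0\}$. For $\psi\in\DD_K$, factor $\psi f=(\psi/\chi_0)(\chi_0 f)$ and write
\begin{equation*}
\mathcal{F}(\psi f) = (2\pi)^{-d}\mathcal{F}(\psi/\chi_0)*\mathcal{F}(\chi_0 f),
\end{equation*}
then split the convolution at $\eta\in\Gamma''$ versus $\eta\notin\Gamma''$. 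On $\{\eta\notin\Gamma''\}$, the standard cone-separation estimate $|\xi-\eta|\geq c(|\xi|+|\eta|)$ valid for $\xi\in\Gamma$, combined with the Schwartz decay of $\mathcal{F}(\psi/\chi_0)$ and the tempered growth of $\mathcal{F}(\chi_0 f)$, yields a contribution with arbitrary polynomial decay in $|\xi|$, hence trivially in $L^2(\Gamma,\langle\xi\rangle^{2s}d\xi)$. On $\{\eta\in\Gamma''\}$, Peetre's inequality $\langle\xi\rangle^s \leq 2^{|s|/2}\langle\xi-\eta\rangle^{|s|}\langle\eta\rangle^s$ reduces the weighted estimate to a convolution $F*G$ with $F(u)=\langle u\rangle^{|s|}|\mathcal{F}(\psi/\chi_0)(u)|\in L^1(\RR^d)$ and $G(\eta)=\langle\eta\rangle^s|\mathcal{F}(\chi_0 f)(\eta)|\mathbf{1}_{\Gamma''}(\eta)\in L^2(\RR^d)$, so Young's inequality delivers the required $L^2(\Gamma)$ bound. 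Both pieces depend on $\psi$ only through seminorms of $\psi/\chi_0$ in $\DD_K$, which are in turn controlled by seminorms of $\psi$ since $1/\chi_0$ is smooth in a neighbourhood of $K$; this yields simultaneously the well-definedness and the continuity of the map in $(ii)$.

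The implication $(ii)\Rightarrow(iii)$ is then obtained by applying the continuous map from $(ii)$ to the family $t\mapsto \overline{\chi(t-x)}$: with $K_1=\overline{B_r(x_0)}$ from $(ii)$ and $K=\overline{B_{r/2}(x_0)}$, for $\chi\in\DD_{K-\{x_0\}}$ and $x\in K$ the function $\overline{\chi(\cdot-x)}$ lies in $\DD_{K_1}$, its $\DD_{K_1}$-seminorms coincide with those of $\chi$, and $V_{\chi}f(x,\xi)=\mathcal{F}(\overline{\chi(\cdot-x)}f)(\xi)$, whence the bound in $(iii)$ holds uniformly in $x\in K$. The step $(iii)\Rightarrow(iv)$ is trivial upon choosing any $\chi\in\DD(\RR^d)$ supported in $K-\{x_0\}$ with $\chi(0)\neq 0$. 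Finally, $(iv)\Rightarrow(i)$ follows by evaluating at $x=x_0\in K$: the function $\tilde\chi(t):=\overline{\chi(t-x_0)}$ lies in $\DD(\RR^d)$ with $\tilde\chi(x_0)=\overline{\chi(0)}\neq 0$ and $V_{\chi}f(x_0,\xi)=\mathcal{F}(\tilde\chi f)(\xi)$, so finiteness of the $L^2(\Gamma)$ norm at $x_0$ is precisely the defining condition for $(x_0,\xi_0)\notin WF_{H^s}(f)$. The single genuine obstacle is thus the Sobolev localisation $(i)\Rightarrow(ii)$, which is handled by the Peetre--Young combination above.
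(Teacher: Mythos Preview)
Your cycle $(i)\Rightarrow(ii)\Rightarrow(iii)\Rightarrow(iv)\Rightarrow(i)$ is exactly the architecture of the paper, and the three easy implications are handled identically (shrink $K_1$ to $K=\overline{B_{r/2}(x_0)}$, feed $\overline{\chi(\cdot-x)}$ into the continuous map, then specialise $x=x_0$). Your argument for $(i)\Rightarrow(ii)$ is correct, but the mechanics differ from the paper's. You factor $\psi f=(\psi/\chi_0)\chi_0 f$ first, introduce an intermediate cone $\Gamma''$, and split the convolution variable according to $\eta\in\Gamma''$ versus $\eta\notin\Gamma''$; the bad piece is disposed of by cone separation $|\xi-\eta|\geq c(|\xi|+|\eta|)$ and Schwartz decay, and the good piece by Peetre's inequality followed by Young's $L^1*L^2\subseteq L^2$. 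The paper instead first bounds $\|\langle\cdot\rangle^s\mathcal{F}(\psi\chi f)\|_{L^2(\Gamma)}$ via Minkowski's integral inequality, splits the inner integral radially as $|\xi|\geq|\eta|/c$ versus $|\xi|<|\eta|/c$ (with $c$ chosen so that the first region, after a change of variables, lands in $\Gamma'$), and only afterwards passes to $\psi f=(\psi/\chi)\chi f$. Your Peetre--Young route has the pleasant feature of treating all $s\in\RR$ at once through the exponent $|s|$, whereas the paper writes out the case $s\geq 0$ and declares the case $s<0$ analogous; on the other hand the paper's radial split avoids the need for a second intermediate cone. Both are standard localisation arguments and lead to the same continuity estimate on $\DD_K$.
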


\begin{proof} $(i)\Rightarrow (ii)$ There exist a cone neighbourhood $\Gamma'$ of $\xi_0$ and $\chi\in\DD(\RR^d)$ with $\chi(x_0)\neq 0$ such that $C_{\chi}=\|\langle \cdot\rangle^s\mathcal{F}(\chi f)\|_{L^2(\Gamma')}<\infty$. There exists a compact neighbourhood $K$ of $x_0$ where $\chi$ never vanishes and there are $C_1,m\geq 1$ such that $|\mathcal{F}(\chi f)(\xi)|\leq C_1\langle \xi\rangle^m$, $\forall \xi\in\RR^d$. Let $\Gamma$ be a cone neighbourhood of $\xi_0$ such that $\overline{\Gamma}\subseteq \Gamma'\cup\{0\}$. One can find $0<c<1$ such that
\beq\label{vsrlkt135}
\{\eta\in\RR^d|\, \exists \xi\in \Gamma\, \mbox{such that}\, |\xi-\eta|\leq c|\xi|\}\subseteq \Gamma'.
\eeq
For $\psi\in\DD_K$, we have $\mathcal{F}(\psi\chi f)=(2\pi)^{-d}\mathcal{F}\psi*\mathcal{F}(\chi f)$ and the Minkowski integral inequality yields
\beqs
\|\langle \cdot\rangle^s \mathcal{F}(\psi\chi f)\|_{L^2(\Gamma)}&\leq& \frac{1}{(2\pi)^d} \int_{\RR^d_{\eta}}\left(\int_{\Gamma}\langle \xi\rangle^{2s}|\mathcal{F}\psi(\eta)|^2|\mathcal{F}(\chi f)(\xi-\eta)|^2d\xi\right)^{1/2} d\eta\\
&\leq& I_1/(2\pi)^d+I_2/(2\pi)^d,
\eeqs
where
\beqs
I_1&=&\int_{\RR^d_{\eta}}|\mathcal{F}\psi(\eta)|\left(\int_{\substack{|\xi|\geq|\eta|/c\\ \xi\in\Gamma}}\langle \xi\rangle^{2s}|\mathcal{F}(\chi f)(\xi-\eta)|^2d\xi\right)^{1/2} d\eta,\\
I_2&=&\int_{\RR^d_{\eta}}|\mathcal{F}\psi(\eta)|\left(\int_{\substack{|\xi|<|\eta|/c\\ \xi\in\Gamma}}\langle \xi\rangle^{2s}|\mathcal{F}(\chi f)(\xi-\eta)|^2d\xi\right)^{1/2} d\eta.
\eeqs
First we assume $s\geq 0$. A change of variables in the inner integral in $I_1$ gives
\beqs
I_1&=&\int_{\RR^d_{\eta}}|\mathcal{F}\psi(\eta) |\left(\int_{\substack{|\xi+\eta|\geq|\eta|/c\\ \xi\in\Gamma-\{\eta\}}}\langle \xi+\eta\rangle^{2s}|\mathcal{F}(\chi f)(\xi)|^2d\xi\right)^{1/2} d\eta\\
&\leq&(1-c)^{-s}\int_{\RR^d_{\eta}}|\mathcal{F}\psi(\eta)|\left(\int_{\Gamma'}\langle \xi\rangle^{2s}|\mathcal{F}(\chi f)(\xi)|^2d\xi\right)^{1/2} d\eta= \frac{C_{\chi}\|\mathcal{F}\psi\|_{L^1(\RR^d)}}{(1-c)^s},
\eeqs
where, in the inequality we have used $\{\xi\in\Gamma-\{\eta\}|\, |\xi+\eta|\geq|\eta|/c\}\subseteq \Gamma'$ which easily follows from (\ref{vsrlkt135}). For $I_2$ we have
\beqs
I_2&\leq& C_1\int_{\RR^d_{\eta}}|\mathcal{F}\psi(\eta)|\left(\int_{\substack{|\xi|<|\eta|/c\\ \xi\in\Gamma}}\langle \xi\rangle^{2s}\langle \xi-\eta\rangle^{2m} d\xi\right)^{1/2} d\eta\\
&\leq& C_1(1+c^{-1})^mc^{-s-d-1}\|\langle \cdot\rangle^{-d-1}\|_{L^2(\RR^d)}\|\langle \cdot\rangle^{m+s+d+1}\mathcal{F}\psi\|^2_{L^1(\RR^d)}.
\eeqs
Combining these estimates together we conclude that there exists $C=C(\chi)>0$ such that $\|\langle \cdot\rangle^s\mathcal{F}(\psi\chi f)\|_{L^2(\Gamma)}\leq C\|\langle \cdot\rangle^{s+m+d+1}\mathcal{F}\psi\|_{L^1(\RR^d)}$, $\forall \psi\in\DD_K$. Since for $\psi\in\DD_K$, we have $\psi f=(\psi/\chi)\chi f$ with $\psi/\chi\in\DD_K$, the claim in $(ii)$ easily follows from this. The case when $s<0$ is similar and we omit it.\\
\indent $(ii)\Rightarrow (iii)$ Let $K_1$ be a compact neighbourhood of $x_0$ and $\Gamma$ a cone neighbourhood of $\xi_0$ such that the mapping $\chi\mapsto \langle \cdot\rangle^s\mathcal{F}(\chi f)$, $\DD_{K_1}\rightarrow L^2(\Gamma)$, is well-defined and continuous; of course, without losing generality, we can assume $K_1=\overline{B_r(x_0)}$, for some $r>0$. There exist $C>0$ and $k\in\NN$ such that
\beqs
\|\langle \cdot \rangle^s\mathcal{F}(\chi f)\|_{L^2(\Gamma)}\leq C\sup_{|\alpha|\leq k}\|D^{\alpha}\chi\|_{L^{\infty}(K_1)},\,\, \forall \chi\in \DD_{K_1}.
\eeqs
Put $K=\overline{B_{r/2}(x_0)}$. For $\chi\in \DD_{K-\{x_0\}}$ and $x\in K$, the function $\chi_x:t\mapsto \overline{\chi(t-x)}$ belongs to $\DD_{K_1}$ and, as $\mathcal{F}(\chi_x f)(\xi)=V_{\chi}f(x,\xi)$, we have
\beqs
\sup_{x\in K}\|\langle\cdot\rangle^s V_{\chi}f(x,\cdot)\|_{L^2(\Gamma)}\leq C\sup_{x\in K}\sup_{|\alpha|\leq k}\|D^{\alpha}\chi_x\|_{L^{\infty}(K_1)}=C\sup_{|\alpha|\leq k}\|D^{\alpha}\chi\|_{L^{\infty}(\RR^d)}.
\eeqs
The implications $(iii)\Rightarrow (iv)$ is trivial and $(iv)\Rightarrow (i)$ follows immediately by specialising $x=x_0$.
\end{proof}

\end{document}